\theoremstyle{plain}
\newtheorem{thm}{Theorem}[section]
\newtheorem{pro}[thm]{Proposition}
\newtheorem{lem}[thm]{Lemma}
\newtheorem{cor}[thm]{Corollary}
\theoremstyle{definition}
\newtheorem{rmk}[thm]{Remark}
\newcommand{\F}{\mathbb{F}}
\newcommand{\Z}{\mathbb{Z}}
\newcommand{\N}{\mathbb{N}}
\DeclareMathOperator{\Aut}{Aut}
\DeclareMathOperator{\Dih}{Dih}
\begin{document}

\title[Finiteness conditions on normalizers or centralizers]
{Some finiteness conditions on normalizers\\ or centralizers in groups}

\author[G.A. Fern\'andez-Alcober]{Gustavo A. Fern\'andez-Alcober}
\address{Matematika Saila\\ Euskal Herriko Unibertsitatea UPV/EHU\\
48080 Bilbao, Spain. {\it E-mail address}: {\tt gustavo.fernandez@ehu.eus}}

\author[L. Legarreta]{Leire Legarreta}
\address{Matematika Saila\\ Euskal Herriko Unibertsitatea UPV/EHU\\
48080 Bilbao, Spain. {\it E-mail address}: {\tt leire.legarreta@ehu.eus}}

\author[A. Tortora]{Antonio Tortora}
\address{Dipartimento di Matematica\\ Universit\`a di Salerno\\
Via Giovanni Paolo II, 132\\ 84084 Fisciano (SA)\\ Italy. {\it E-mail address}: {\tt antortora@unisa.it}}

\author[M. Tota]{Maria Tota}
\address{Dipartimento di Matematica\\ Universit\`a di Salerno\\
Via Giovanni Paolo II, 132\\ 84084 Fisciano (SA)\\ Italy. {\it E-mail address}: {\tt mtota@unisa.it}}

\thanks{The first two authors are supported by the Spanish Government, grants
MTM2011-28229-C02-02 and MTM2014-53810-C2-2-P, and by the Basque Government, grants IT753-13 and IT974-16.
The last two authors would like to thank the Department of Mathematics at the University of the Basque Country for its excellent hospitality while part of this paper was being written; they also wish to thank G.N.S.A.G.A. (INdAM) for financial support.}

\keywords{Normalizers, centralizers, locally finite groups, locally nilpotent groups\vspace{3pt}}
\subjclass[2010]{20E99}

\begin{abstract}
We consider the following two finiteness conditions on normalizers and centralizers in a group $G$: (i) $|N_G(H):H|<\infty$ for every $H\ntriangleleft G$, and (ii) $|C_G(x):\langle x \rangle|<\infty$ for every $\langle x \rangle \ntriangleleft G$.
We show that (i) and (ii) are equivalent in the classes of locally finite groups and locally nilpotent groups.
In both cases, the groups satisfying these conditions are a special kind of cyclic extensions of Dedekind groups.
We also study a variation of (i) and (ii), where the requirement of finiteness is replaced with a bound.
In this setting, we extend our analysis to the classes of periodic locally graded groups and non-periodic groups.
While the two conditions are still equivalent in the former case, in the latter the condition about normalizers is stronger than that about centralizers.
\end{abstract}

\maketitle

\section{Introduction}

Motivated by results of Q.\ Zhang and Gao \cite{zha-gao} and X.\ Zhang and Guo \cite{zha-guo}, we considered in \cite{FLTT2} the finite $p$-groups $G$, where $p$ is a prime, satisfying one of the following three conditions for a given $n$:
that $|N_G(H):H|\le n$ for every non-normal subgroup $H$ of $G$, or that either
$|N_G(\langle x \rangle):\langle x \rangle|\le n$ or $|C_G(x):\langle x \rangle|\le n$ for every non-normal cyclic subgroup $\langle x \rangle$ of $G$.
In \cite{FLTT} we also dealt with the last of these conditions with non-central elements instead of elements generating a non-normal subgroup, in the case of general finite groups.

Later, in \cite{FLTT3} and \cite{FLTT4}, we extended our earlier work to the realm of infinite groups. Following \cite{FLTT3}, we say that a group $G$ is
an FCI-group (FCI for `finite centralizer index') provided that
\begin{equation}
\label{FCI}
|C_G(x):\langle x \rangle| < \infty
\quad
\text{for every $\langle x \rangle \ntriangleleft G$,}
\end{equation}
and if there exists $n$ such that
\begin{equation}
\label{BCI}
|C_G(x):\langle x \rangle| \le n
\quad
\text{for every $\langle x \rangle \ntriangleleft G$,}
\end{equation}
then we call $G$ a BCI-group (BCI for `bounded centralizer index').
Similarly, we can require that
\begin{equation}
\label{FNI for all}
|N_G(H):H| < \infty
\quad
\text{for every $H \ntriangleleft G$,}
\end{equation}
and then we say that $G$ is an FNI-group (FNI for `finite normalizer index'), or the seemingly weaker condition that this only holds for non-normal cyclic subgroups of $G$:
\begin{equation}
\label{FNI}
|N_G(\langle x \rangle):\langle x \rangle| < \infty
\quad
\text{for every $\langle x \rangle \ntriangleleft G$.}
\end{equation}
We can also ask for the existence of a uniform bound for the indices in (\ref{FNI for all}) and
(\ref{FNI}), thus getting the conditions that, for some positive integer $n$,
\begin{equation}
\label{BNI for all}
|N_G(H):H| \le n
\quad
\text{for every $H \ntriangleleft G$,}
\end{equation}
in which case we speak of BNI-groups (BNI for `bounded normalizer index'), or that
\begin{equation}
\label{BNI}
|N_G(\langle x \rangle):\langle x \rangle| \le n
\quad
\text{for every $\langle x \rangle \ntriangleleft G$.}
\end{equation}
Of course, there are obvious connections between these conditions.
Observe that (\ref{FNI for all}) implies (\ref{FNI}), which is equivalent to (\ref{FCI});
on the other hand, (\ref{BNI for all}) implies (\ref{BNI}), which in turn implies (\ref{BCI}).

All the conditions above hold in finite groups, so they can be considered as finiteness conditions, and they are clearly hereditary for subgroups. Also, (\ref{FNI for all}) and (\ref{BNI for all}) can be easily checked to be hereditary for quotients. However, the rest of the conditions do not usually transfer to quotients. For example, consider the generalized dihedral group $G=\Dih(A)$, where $A$ is torsion-free abelian of infinite $0$-rank.
Then $G$ satisfies conditions (\ref{FCI}), (\ref{BCI}), (\ref{FNI}), and (\ref{BNI}), but if $N$ is the subgroup consisting of all fourth powers of elements of $A$, then
$G/N\cong\Dih(A/N)$ does not satisfy any of them (see \cite[Section 2]{FLTT3} for more details).

In \cite{FLTT3} we considered infinite locally finite FCI-groups, whereas in \cite{FLTT4} we dealt with infinite locally nilpotent FCI-groups, and also with non-periodic BCI-groups.
In every case, the groups in question can be characterized as a special type of cyclic extensions of certain Dedekind groups.
Furthermore, in \cite{FLTT3}, we proved that every locally graded periodic BCI-group is locally finite, where the restriction to locally graded groups is imposed to avoid the presence of Tarski monster groups (i.e.\ infinite simple $p$-groups, for $p$ a prime, all of whose proper non-trivial subgroups are of order $p$). Recall that a group is locally graded if every non-trivial finitely generated subgroup has a nontrivial finite image: for example, locally (soluble-by-finite) groups, as well as residually finite groups, are locally graded.

The goal of this paper is to study the conditions about normalizers in the same cases that we have considered for centralizers.
For locally finite groups, we will show that the conditions for normalizers and centralizers are all equivalent (Theorem \ref{normalizers locally finite}).
As a consequence, we obtain the equivalence between the BNI- and BCI-conditions for periodic locally graded groups (Corollary \ref{locally graded}).
In the locally nilpotent case, it suffices to deal with non-periodic groups. In this situation, we know that BCI-groups are abelian \cite[Corollary 4.4]{FLTT4}, and so the same holds for BNI-groups.
Hence we only consider the case of FNI-groups, and again we obtain that they are the same as FCI-groups (Theorem \ref{normalizers locally nilpotent}).
Finally, contrary to the previous cases, non-periodic BNI-groups are not the same as non-periodic BCI-groups (Theorem \ref{normalizers non-periodic}).
On the other hand, condition (\ref{BNI}) turns out to be equivalent to being a BCI-group for the classes of periodic locally graded groups and non-periodic groups.
However, it is not clear to us whether this holds in all generality.

\vspace{10pt}

\noindent
Most of our notation is standard and can be found in \cite{Ro}. In particular, if $G$ is a periodic group, we write $\pi(G)$ for the set of prime divisors of the orders of the elements of $G$; if $G$ is also nilpotent and $\varphi$ is an automorphism of $G$, we denote by $G_p$ the unique Sylow $p$-subgroup of $G$, and by $\varphi_p$ the restriction of $\varphi$ to $G_p$. Finally, we write $\Z_p$ for the ring of $p$-adic integers and we denote by $\Z_p^{\times}$ its group of units.

\section{Locally finite groups}

In this section we will prove the equivalence, in the realm of locally finite groups, between the conditions for normalizers and centralizers described in the introduction. A crucial role will be played by the determination of infinite locally finite FCI-groups \cite{FLTT3}, which is in turn related to the analysis of the centralizer of a power automorphism.

Recall that an automorphism of a group $G$ is said to be a
\emph{power automorphism\/} if it sends every element $x\in G$ to a power of $x$. Power automorphisms form an abelian subgroup of $\Aut G$.
According to a result of Robinson \cite[Lemma 4.1.2]{Ro3} (see also \cite[Section 2]{FLTT4}), every power automorphism $\varphi$ of an abelian
$p$-group $A$ can be written in the form $a\mapsto a^t$ for all $a\in A$ and for some
$t\in\Z_p^{\times}$.
We then say that $t$ is an \emph{exponent\/} for $\varphi$; following \cite{FLTT4}, we  write $\exp \varphi=t$.
Observe, however, that $\exp\varphi$ is only uniquely determined if the group $A$ is of infinite exponent.

We require the following result, which is Lemma 3.2, part (i), of \cite{FLTT3}.

\begin{lem}
\label{cent of auto}
Let $A$ be an abelian $p$-group, where $p$ is a prime, and let $\varphi$ be a power automorphism of $A$ of finite order $m>1$.
If $m$ divides $p-1$, then $C_A(\varphi^k)=1$ for every $k\in\{1,\ldots,m-1\}$.
\end{lem}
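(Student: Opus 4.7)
The plan is to translate the problem into $p$-adic arithmetic via $t := \exp\varphi \in \Z_p^{\times}$, which satisfies $\varphi^k(a) = a^{t^k}$ for every $a \in A$ and every $k \geq 0$. Thus $a \in C_A(\varphi^k)$ is equivalent to $a^{t^k - 1} = 1$, and since every nonidentity element of $A$ has $p$-power order, it suffices to show that $t^k - 1$ is coprime to $p$ (i.e.\ a unit in $\Z_p$) for every $k \in \{1, \ldots, m-1\}$: indeed, this would force $|a|$ to divide a $p$-adic unit, so $|a| = 1$.

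Since $m > 1$ divides $p - 1$, the prime $p$ is odd, and one has the classical decomposition $\Z_p^{\times} = \mu_{p-1} \times (1 + p\Z_p)$, where $\mu_{p-1}$ is cyclic of order $p - 1$ and $1 + p\Z_p$ is a torsion-free pro-$p$ group; an analogous decomposition of $(\Z/p^n\Z)^{\times}$ is available when $A$ has finite exponent $p^n$. Writing $t = \zeta u$ with $\zeta \in \mu_{p-1}$ and $u$ a principal unit, the hypothesis $\varphi^m = \mathrm{id}_A$ gives $t^m = 1$ (modulo the exponent of $A$), whence $\zeta^m = 1$ and $u^m = 1$. Since $\gcd(m, p) = 1$ and $u$ lies in a pro-$p$ (or finite $p$-)group, this forces $u = 1$, so $t = \zeta$ is in fact a root of unity in $\mu_{p-1}$; because $\varphi$ has order exactly $m$, $\zeta$ must be a primitive $m$-th root of unity.

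The hypothesis $m \mid p-1$ now enters decisively. The subgroup $\mu_m \subseteq \mu_{p-1}$ has exactly $m$ elements and is carried isomorphically onto the unique $\mu_m \subseteq \F_p^{\times}$ by reduction modulo $p$; hence the image of $t$ in $\F_p^{\times}$ is still a primitive $m$-th root of unity. Consequently $t^k \not\equiv 1 \pmod{p}$ for every $0 < k < m$, that is, $t^k - 1 \in \Z_p^{\times}$, and the reduction at the start of the argument completes the proof. The only delicate point is the step showing that $t$ itself is a torsion element of $\Z_p^{\times}$, which is where the coprimality $\gcd(m, p) = 1$ is used; everything else is routine $p$-adic bookkeeping.
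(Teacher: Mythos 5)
Your argument is correct. One caveat about the comparison: this lemma is not proved in the paper at all --- it is quoted verbatim from Lemma 3.2(i) of \cite{FLTT3} --- so there is no in-paper proof to measure you against; what the paper does supply is exactly the ingredient you start from, namely Robinson's result that a power automorphism of an abelian $p$-group has an exponent $t\in\Z_p^{\times}$. Granting that, your proof is sound: the reduction of the statement to ``$t^k-1$ is a unit'' is valid because a $p$-element killed by a $p$-adic unit is trivial; the splitting $\Z_p^{\times}=\mu_{p-1}\times(1+p\Z_p)$ (or its finite analogue mod $p^n$ when $A$ has finite exponent, which you rightly keep track of, since $t$ is only determined modulo the exponent) together with $\gcd(m,p)=1$ kills the principal-unit part, so $t$ is a prime-to-$p$ root of unity of exact order $m$, and injectivity of reduction on $\mu_{p-1}$ gives $t^k\not\equiv 1\pmod p$ for $0<k<m$. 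Two small remarks: the identification of the order of $\varphi$ with the multiplicative order of $t$ modulo the exponent of $A$ deserves a sentence (it uses that $A$ contains elements of order equal to, or arbitrarily close to, its exponent), and your proof really only consumes $\gcd(m,p)=1$ --- the divisibility $m\mid p-1$ is then recovered rather than used, so you have in effect proved a marginally stronger statement. The more pedestrian route (and the likely one in \cite{FLTT3}) is to look at the action on elements of small order: if $1\ne a\in C_A(\varphi^k)$, restrict to the socle or to $\langle a\rangle$ and work with an integer congruence $t^k\equiv 1$ modulo $o(a)$, hence modulo $p$, contradicting that $t$ has order $m$ modulo $p$; your $p$-adic packaging buys a cleaner global statement at the cost of invoking the structure of $\Z_p^{\times}$, but both hinge on the same point, that a power automorphism of order prime to $p$ is faithfully detected on elements of order $p$.
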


For the reader's convenience, we also provide the characterization of infinite locally finite FCI-groups (see \cite[Theorem 4.4 and Remark 4.7]{FLTT3}).

\begin{thm}
\label{determination locally finite FCI-groups}
A non-Dedekind group $G$ is an infinite locally finite FCI-group if and only $G=\langle g,D \rangle$, where $D$ is an infinite periodic Dedekind group with $D_2$ of finite rank, and $g$ acts on $D$ as a power automorphism $\varphi$ of order $m>1$ such that
$|G:D|=m$ and
\begin{equation*}
\label{condition for locally finite}
\prod_{p\in\pi_0\cup\pi_1} \, |D_p| < \infty,
\end{equation*}
where
$$
\pi_0 = \{ p\in \pi(D) \mid o(\varphi_p)<m \},
$$
and
$$
\pi_1 = \{ p\in \pi(D) \mid \text{$o(\varphi_p)=m$, $p>2$ and
$p\not\equiv 1\hspace{-10pt}\pmod m$} \}.
$$
Furthermore, if $D_2$ is infinite, then $D_2$ is abelian, $\varphi_2$ is the inversion automorphism, and $m=2$.
\end{thm}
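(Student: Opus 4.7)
The plan is to split the proof into sufficiency and necessity, building the structure of $G$ up from the FCI condition. For \emph{sufficiency}, assume $G = \langle g, D \rangle$ has the stated form and let $\langle x \rangle$ be a non-normal cyclic subgroup. Since $|G:D|=m$, write $x = g^k d$ with $0 \le k < m$ and $d \in D$; note $k \neq 0$, for otherwise $x \in D$ would force $\langle x \rangle \triangleleft G$ since $g$ acts on $D$ as a power automorphism. I would analyse $C_G(x)$ via the coset decomposition of $G$ modulo $D$, reducing the problem to computing the fixed points of $\varphi^k$ on $D$. Using the primary decomposition $D = \prod_p D_p$ and Lemma~\ref{cent of auto}, one obtains $C_{D_p}(\varphi_p^k) = 1$ whenever $o(\varphi_p) = m$ divides $p-1$; for $p\in\pi_0\cup\pi_1$ the fixed-point subgroup is at worst all of $D_p$. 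The hypothesis $\prod_{p\in\pi_0\cup\pi_1}|D_p|<\infty$, together with the special structure imposed when $D_2$ is infinite, then yields $|C_G(x):\langle x\rangle|<\infty$.

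For \emph{necessity}, assume $G$ is an infinite non-Dedekind locally finite FCI-group. I would set $D = \{x \in G : \langle x \rangle \triangleleft G\}$ and first show this coincides with the FC-center of $G$: if $x$ has finitely many conjugates then $|G:C_G(x)|$ is finite, so $C_G(x)$ is infinite; since $\langle x\rangle$ is finite by local finiteness, $|C_G(x):\langle x\rangle|$ is infinite, forcing $\langle x\rangle\triangleleft G$ by the FCI condition. The reverse inclusion is routine. Thus $D$ is a characteristic subgroup of $G$, every cyclic subgroup of $D$ is normal in $G$, and in particular $D$ is Dedekind. The quotient $G/D$ acts faithfully by power automorphisms on $D$ and is therefore abelian; the FCI condition together with the assumption that $G$ is non-Dedekind then forces $G/D$ to be cyclic of some finite order $m>1$, giving an element $g\in G\setminus D$ with $G=\langle g,D\rangle$ and $|G:D|=m$. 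Since $G$ is infinite, $D$ must be infinite as well.

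The most delicate part is deducing the arithmetic condition $\prod_{p\in\pi_0\cup\pi_1}|D_p|<\infty$ and the constraints on $D_2$. For each $p\in\pi_0$, choose $j\in\{1,\dots,m-1\}$ with $\varphi_p^j=1$; then $g^j$ centralises $D_p$ but $\langle g^j\rangle$ is non-normal in $G$, and combining $g^j$ with a suitable $d\in D_p$ gives a non-normal cyclic subgroup whose centralizer absorbs $D_p$, forcing $|D_p|<\infty$ via FCI. A parallel argument, based on the fact that $m\nmid p-1$ implies some power $\varphi_p^j$ has a non-trivial fixed subgroup in $D_p$, handles $p\in\pi_1$. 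A uniform bound on the FCI indices then shows that only finitely many primes can contribute non-trivially, giving the finite product. The main obstacle is the case where $D_2$ is infinite: here the classification of Dedekind $2$-groups must be combined with the FCI condition to rule out $D_2$ being non-abelian and to pin down $\varphi_2$ as the inversion automorphism with $m=2$; since Lemma~\ref{cent of auto} does not apply to $p=2$ with $m>1$, one has to argue separately that any other power automorphism of $D_2$ of finite order would produce enough non-normal cyclic subgroups with infinite-index centralizer to violate the FCI hypothesis.
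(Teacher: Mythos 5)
First, a point of reference: the paper does not prove Theorem \ref{determination locally finite FCI-groups} at all; it is quoted from \cite[Theorem 4.4 and Remark 4.7]{FLTT3}, so your proposal must stand on its own. Your sufficiency half is essentially fine: every element of $D$ generates a normal subgroup of $G$, any $x\notin D$ acts on each $D_p$ (for odd $p$, and for abelian $D_2$) as $\varphi_p^k$ with $1\le k\le m-1$, Lemma \ref{cent of auto} gives trivial fixed points for $p\notin\pi_0\cup\pi_1\cup\{2\}$, the finite product handles $\pi_0\cup\pi_1$, and the finite rank of $D_2$ handles the inversion case; this is the same computation the paper performs (in quotients) in Proposition \ref{G/N FCI locally finite}. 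The necessity half, however, has genuine gaps. (1) ``Since $G$ is infinite, $D$ must be infinite as well'' is not routine: you need something like the Hall--Kulatilaka theorem that an infinite locally finite group contains an infinite abelian subgroup $A$, after which either $A\le D$ or some $x\in A$ generates a non-normal cyclic subgroup with $A\le C_G(x)$, contradicting (\ref{FCI}). (2) The assertion that the FCI condition ``forces $G/D$ to be cyclic of some finite order $m>1$'' is the heart of the theorem and you give no mechanism. When some $D_p$ is infinite one can argue that $C_G(D_p)\le D$ (else FCI fails) and that $G/D$ embeds in the torsion of $\PAut(D_p)$, which is cyclic of order dividing $p-1$ for odd $p$ and of order $2$ when $D_2$ is abelian of infinite exponent; but when every $D_p$ is finite (so infinitely many primes occur) finiteness and cyclicity of $G/D$ require a separate argument (e.g.\ pigeonhole over primes), and your sketch does not address this case at all.

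(3) Your treatment of $\pi_1$ is flawed as stated: producing a power $\varphi_p^j$ with a \emph{non-trivial} fixed subgroup does not contradict (\ref{FCI}), which only forbids an \emph{infinite} centralizer over a finite cyclic subgroup; indeed, if $D_p$ has finite rank and infinite exponent, every non-trivial power of $\varphi_p$ has finite fixed-point subgroup. The correct route is different: if $D_p$ has infinite exponent then $\varphi_p$ has a uniquely determined exponent in $\Z_p^{\times}$, whose torsion subgroup is cyclic of order $p-1$, so $o(\varphi_p)=m$ forces $m\mid p-1$ and $p\notin\pi_1$; if $D_p$ has finite exponent and infinite rank, then a proper power $g^j$ with $j=\gcd(m,p-1)<m$ fixes the infinite socle of $D_p$ pointwise, violating (\ref{FCI}); hence $D_p$ is finite for $p\in\pi_1$. (4) ``A uniform bound on the FCI indices'' is not available: (\ref{FCI}) gives only finiteness of each individual index, and the fact that locally finite FCI-groups are BCI-groups is itself deduced from this very theorem in \cite{FLTT3}, so invoking a bound here is circular. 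To see that only finitely many $p\in\pi_0\cup\pi_1$ have $D_p\ne 1$, pigeonhole instead over the finitely many proper divisors $j$ of $m$: infinitely many such primes would yield a single $j<m$ for which $g^j\notin D$ centralizes the (infinite) product of the corresponding $D_p$'s, or at least of their socles, again contradicting (\ref{FCI}). Finally, the ``furthermore'' clause needs the explicit observations that power automorphisms fix all involutions (forcing $r(D_2)<\infty$, and ruling out an infinite non-abelian $D_2\cong Q_8\times E$) and that the torsion of $\Z_2^{\times}$ is $\{\pm 1\}$ (forcing $\varphi_2$ to be inversion and $m=2$); your sketch only gestures at this.
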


Recall (see, for instance, \cite[Section 5.1]{LR}) that the rank of an abelian $p$-group is the dimension, as a vector space over
the field with $p$ elements, of the subgroup formed by the elements of order at most $p$. If $A$ is an abelian group, the $p$-rank of $A$ is defined as the rank of $A_p$, and the torsion-free rank, or $0$-rank, of $A$ is the cardinality of a maximal independent subset of elements of $A$ of infinite order. These are denoted by $r_p(A)$ and $r_0(A)$, respectively. The (Pr\"ufer) rank of $A$ is
\[
r(A) = r_0(A) + \max_p \, r_p(A),
\]
where $p$ runs over all prime numbers. More generally, a group $G$ is said to have finite rank $r=r(G)$ if every finitely generated subgroup of $G$ can be generated by $r$ elements and $r$ is the least such integer.

We know from \cite[Corollary 4.5]{FLTT3} that every locally finite FCI-group is a BCI-group.
Next we prove that actually every quotient of a locally finite FCI-group is a BCI-group, and furthermore with a uniform bound.
Note that this can also be seen as a generalization, in the class of locally finite groups, of Proposition 2.2 of \cite{FLTT3}, according to which every quotient of an FCI-group by a finite normal subgroup is again an FCI-group.

\begin{pro}
\label{pro}
\label{G/N FCI locally finite}
Let $G$ be a locally finite FCI-group.
Then there exists $n\geq 1$ depending only on $G$ such that, whenever $N\lhd G$ and
$\langle xN \rangle \ntriangleleft G/N$, we have $|C_{G/N}(xN)| \le n$.
In particular, $G/N$ is a BCI-group for every normal subgroup $N$ of $G$.
\end{pro}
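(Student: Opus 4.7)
The plan is to leverage the structural characterization of Theorem~\ref{determination locally finite FCI-groups}. If $G$ is finite the claim is trivial, and if $G$ is Dedekind then every cyclic subgroup of every quotient is normal, so the hypothesis is vacuous. We may therefore assume $G$ is infinite and non-Dedekind, and write $G=\langle g,D\rangle$ with $D$ an infinite periodic Dedekind group with $D_2$ of finite rank $r$, with $g$ inducing a power automorphism $\varphi$ of order $m$ on $D$ such that $|G:D|=m$ and $K:=\prod_{p\in\pi_0\cup\pi_1}D_p$ is finite. For $N\lhd G$, set $\bar{G}=G/N$, $\bar{D}=DN/N$ and $\bar{g}=gN$. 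Then $\bar{D}$ is Dedekind, $\bar{g}$ still acts on $\bar{D}$ as a power automorphism $\bar{\varphi}$, and $m':=|\bar{G}:\bar{D}|$ divides $m$; the aim is to bound $|C_{\bar{G}}(\bar{x})|$ uniformly in $N$ by transferring to $\bar{G}$ the finiteness arguments used for $G$.

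Since $\bar{\varphi}$ is a power automorphism it preserves every cyclic subgroup of $\bar{D}$, and $\bar{D}$ is Dedekind, so any $\langle\bar{y}\rangle$ with $\bar{y}\in\bar{D}$ is automatically normal in $\bar{G}$. Hence whenever $\langle\bar{x}\rangle\ntriangleleft\bar{G}$ we must have $\bar{x}=\bar{g}^{j}\bar{d}$ with $1\le j<m'$ and $\bar{d}\in\bar{D}$. Since $C_{\bar{G}}(\bar{x})$ meets at most $m'$ cosets of $\bar{D}$, it suffices to bound $|C_{\bar{D}}(\bar{x})|$ one Sylow prime at a time. Primes in $\pi_0\cup\pi_1$ contribute at most $|K|$ in total, each $\bar{D}_p$ being a quotient of the finite group $D_p$. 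For an odd prime $p\in\pi_2$ with $\bar{D}_p\neq 1$, the key observation is that $t_p:=\exp\varphi_p$ is already a primitive $m$-th root of unity modulo $p$, so $\bar{\varphi}_p$ continues to have order $m$ on every nontrivial quotient $\bar{D}_p$; provided $\bar{\varphi}^{j}$ acts non-trivially there, Lemma~\ref{cent of auto} yields $C_{\bar{D}_p}(\bar{\varphi}_p^{j})=1$. For $p=2\in\pi_2$ (which forces $m=2$ and $\varphi_2$ to be inversion) the relevant centralizer is the $2$-torsion $\Omega_1(\bar{D}_2)$, of order at most $2^{r}$ since the rank of an abelian $2$-group does not grow under quotients. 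This leads to a candidate bound of the form $n=m\cdot|K|\cdot\max\{|D_2|,2^{r}\}$.

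The main obstacle is the degenerate scenario $\bar{\varphi}^{j}=1$, since then Lemma~\ref{cent of auto} no longer trivializes $C_{\bar{D}_p}(\bar{\varphi}_p^{j})$ and one risks $C_{\bar{D}}(\bar{x})=\bar{D}$. The idea to circumvent this is to note that $\bar{\varphi}^{j}=1$ forces $o(\bar{\varphi})\le j<m$, and by the order-persistence observation above this entails $\bar{D}_p=1$ for every odd $p\in\pi_2$ and, when $2\in\pi_2$, that $\bar{D}_2$ has exponent at most $2$. Consequently $\bar{D}$ is contained in a subgroup of order at most $|K|\cdot 2^{r}$, so $\bar{G}$ itself is finite with $|\bar{G}|\le n$, making $|C_{\bar{G}}(\bar{x})|\le n$ automatic. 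The Hamiltonian case is absorbed throughout by the safe estimate $|\bar{D}_2|\le|D_2|<\infty$ (as $D_2$ is then finite). Combining both cases yields the uniform bound $n$ depending only on $G$, and the BCI conclusion follows immediately.
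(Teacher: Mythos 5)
Your proposal is correct and follows essentially the same route as the paper's proof: reduce via the structure theorem for infinite locally finite FCI-groups, note that cyclic subgroups of the image of $D$ are normal so only $\overline x\notin\overline D$ matters, bound $|C_{\overline G}(\overline x)|$ Sylow-by-Sylow with the primes of $\pi_0\cup\pi_1$ contributing at most $\prod|D_p|$, the $2$-component contributing at most $|D_2|$ or $2^{r(D_2)}$, and the remaining primes contributing trivially because the order $m$ of $\varphi_p$ persists on nontrivial quotients (your primitive $m$-th root of unity mod $p$ observation is exactly the paper's exponent argument, stated directly rather than by contradiction) so that Lemma \ref{cent of auto} applies. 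The only differences are cosmetic: the paper splits $D=A\times B$ with $A$ the central Hall $2'$-part to identify the action of $x=g^kd$ with $\overline\varphi^{\,k}$, whereas you handle the finite or Hamiltonian $2$-part by a crude estimate, and your ``degenerate scenario'' paragraph is redundant, since order persistence already forces $\overline D_p=1$ whenever $\overline\varphi_p^{\,j}$ would act trivially for some $1\le j<m$.
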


\begin{proof}
We may assume that $G$ is infinite and non-Dedekind.
Then $G=\langle g,D \rangle$, as in
Theorem \ref{determination locally finite FCI-groups}, and we use the notation therein throughout the proof.
If $D_2$ is finite, let $A$ be the Hall $2'$-subgroup of $D$ and let $B=D_2$; otherwise, put $A=D$ and $B=1$.
In any case we have $D=A\times B$ with $A\le Z(D)$ and $B$ finite.
We write $\overline G$ for $G/N$, and we denote by $\overline\varphi$ and
$\overline\varphi_p$ the automorphisms induced by $\varphi$ on $\overline A$ and
$\overline A_p$, respectively.

Since every element of $D$ generates a normal subgroup of $G$, in order to prove the theorem we consider an arbitrary element $x\in G\smallsetminus D$, in which case we show that
\begin{equation}
\label{general bound}
|C_{\overline G}(\overline x)|
\le
m M \prod_{p\in (\pi_0\cup\pi_1)\smallsetminus \{2\}} \, |D_p|,
\end{equation}
where $M=|D_2|$ or $2^{r(D_2)}$, according as $D_2$ is finite or infinite.
Let us write $x=g^kd$, with $k\in\{1,\ldots,m-1\}$ and $d\in D$.
Then
\[
|C_{\overline G}(\overline x)|
\le
|\overline G:\overline A| \, |C_{\overline A}(\overline x)|
\le
m \, |B| \, |C_{\overline A}(\overline\varphi^k)|
=
m \, |B| \, \prod_{p\in \pi(\overline A)} \, |C_{\overline A_p}(\overline\varphi^k)|.
\]
Now if $p\in(\pi_0\cup\pi_1)\smallsetminus\{2\}$ then
$|C_{\overline A_p}(\overline\varphi^k)|\le |D_p| < \infty$.
On the other hand, if $p=2$ and $\overline A_2$ is non-trivial then $D_2$ is infinite abelian, $\varphi_2$ is the inversion automorphism and $m=2$.
Hence $|C_{\overline A_2}(\overline\varphi)|\le 2^{r(D_2)}$.

Thus (\ref{general bound}) will be proved once we see that
$C_{\overline D_p}(\overline\varphi^k)$ is trivial for $p\not\in \pi_0\cup\pi_1\cup\{2\}$.
Assume that $\overline D_p\ne \overline 1$.
Since $o(\varphi_p)=m$ divides $p-1$, if we see that also
$o(\overline\varphi_p)=m$, then the result follows from Lemma \ref{cent of auto}.
Assume by way of contradiction that the order of $\overline\varphi_p$, say $k$, is smaller than $m$.
If $t=\exp\varphi_p^k$ then $t\not\equiv 1\pmod p$; otherwise $\varphi_p^k$ fixes all elements of order $p$ in $D_p$, in contrast with Lemma \ref{cent of auto}.
Then for every $d\in D_p$ we have $\varphi_p^k(d)=dn$ for some $n\in N$, and on the other hand, $\varphi_p^k(d)=d^{\,t}$.
Consequently, $n=d^{\,t-1}$ is a generator of $\langle d \rangle$ and $d\in N$.
Hence $\overline D_p=\overline 1$, which is a contradiction.
\end{proof}

We are now ready to prove the main result of this section.

\begin{thm}
\label{normalizers locally finite}
Let $G$ be a locally finite group.
Then all the following conditions are equivalent:
\begin{enumerate}
\item
$G$ is an FNI-group.
\item
$G$ is an FCI-group.
\item
$G$ is a BNI-group.
\item
$G$ is a BCI-group.
\item
$G$ satisfies condition (\ref{FNI}).
\item
$G$ satisfies condition (\ref{BNI}).
\end{enumerate}
\end{thm}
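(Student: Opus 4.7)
The plan is to close a cycle of implications. Most of them are immediate: (iii) $\Rightarrow$ (i) $\Rightarrow$ (v), (iii) $\Rightarrow$ (vi) $\Rightarrow$ (iv) $\Rightarrow$ (ii), and (v) $\Leftrightarrow$ (ii) all follow from the observations recorded in the introduction, namely that bounded conditions imply the corresponding finite ones, that the all-subgroup conditions imply the cyclic-subgroup versions, and that (\ref{FNI}) is equivalent to (\ref{FCI}). The whole theorem therefore reduces to showing (ii) $\Rightarrow$ (iii), i.e., that every locally finite FCI-group is a BNI-group.

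The cases where $G$ is finite or Dedekind are trivial, so by Theorem \ref{determination locally finite FCI-groups} we may write $G=\langle g,D\rangle$ with the stated structure. Given $H\ntriangleleft G$, I would first show $H\not\le D$: using the decomposition of the periodic Dedekind group $D$ into the direct product of its Sylow subgroups $D_p$, one has $H=\prod_p (H\cap D_p)$; since $\varphi_p$ is a power automorphism of $D_p$ it leaves every subgroup of $D_p$ invariant, so $\varphi(H)=H$, and together with $H\lhd D$ this would force $H\lhd G$, a contradiction. Set $K=H\cap D$, which is normal in $G$ by the same observation, and pass to $\overline G=G/K$. The image $\overline H:=H/K\cong HD/D$ is a subgroup of the cyclic group $G/D$ of order $m$, hence $\overline H=\langle \overline h\rangle$ is cyclic of some order $r$ and remains non-normal in $\overline G$.

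Proposition \ref{pro} now provides an integer $n$, depending only on $G$, with $|C_{\overline G}(\overline h)|\le n$; in particular $r\le n$. Combining this with the standard embedding $N_{\overline G}(\overline H)/C_{\overline G}(\overline h)\hookrightarrow \Aut\langle \overline h\rangle$ of order $\phi(r)$ yields
\[
|N_{\overline G}(\overline H):\overline H|\le \phi(r)\,n/r\le n.
\]
Since $N_G(H)/K=N_{\overline G}(\overline H)$ and $H/K=\overline H$, this is precisely $|N_G(H):H|\le n$, so $G$ is BNI with uniform bound $n$. The main obstacle is the preliminary reduction $H\not\le D$: everything hinges on the power-automorphism nature of $\varphi_p$, which simultaneously makes $K$ normal in $G$ and $\overline H$ cyclic, so that the BCI-type bound of Proposition \ref{pro} can be applied to a single generator $\overline h$ and then converted into the required normalizer-index bound via $\Aut\langle \overline h\rangle$.
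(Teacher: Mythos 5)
Your proof is correct and follows essentially the same route as the paper: reduce everything to showing that a locally finite FCI-group (equivalently, BCI-group) is a BNI-group, use Theorem \ref{determination locally finite FCI-groups} to get $H\not\le D$ and $H\cap D\lhd G$, and apply Proposition \ref{pro} in $\overline G=G/(H\cap D)$ to the cyclic non-normal image of $H$. The only difference is cosmetic: the paper bounds $|N_G(H):H|\le m\,|N_D(H):H\cap D|$ and identifies $N_D(H)/(H\cap D)$ with $C_{\overline D}(\overline h)$, whereas you stay in $\overline G$ and absorb the factor $N_{\overline G}(\overline H)/C_{\overline G}(\overline h)\hookrightarrow\Aut\langle\overline h\rangle$, which gives the marginally sharper uniform bound $n$ in place of $mn$.
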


\begin{proof}
Since locally finite FCI-groups are known to be BCI-groups, it readily follows that we only need to prove that every locally finite BCI-group $G$ is a BNI-group.
We may assume that $G$ is infinite and non-Dedekind.
Then Theorem \ref{determination locally finite FCI-groups} applies, and we write
$G=\langle g,D \rangle$ and $m=|G:D|>1$, as in that theorem.
Let $H$ be a non-normal subgroup of $G$, and observe that $H$ is not contained in $D$, since every subgroup of $D$ is normal in $G$.
We have
\[
|N_G(H):H| = |N_G(H)D:HD| |N_D(H):H\cap D| \le m |N_D(H):H\cap D|,
\]
and consequently it suffices to prove that $|N_D(H):H\cap D|$ is finite and can be bounded as $H$ runs over all non-normal subgroups of $G$.
Since $G/D$ is cyclic, we can write $H=\langle h,H\cap D \rangle$ for some
$h\in H\smallsetminus D$.

Now observe that an element $x\in G$ lies in $N_G(H)$ if and only if $[x,h]\in H$, since
$H\cap D\lhd G$.
It follows that
\[
N_D(H) = \{ d\in D \mid [d,h]\in H\cap D \},
\]
and then in the quotient group $\overline G=G/(H\cap D)$ we have
\[
\overline {N_D(H)} = C_{\overline D} (\overline h).
\]
Thus $|N_D(H):H\cap D|=|C_{\overline D} (\overline h)|$.
Now, since $G$ is a locally finite FCI-group and
$\langle \overline h \rangle \ntriangleleft \overline G$, it follows from Proposition \ref{G/N FCI locally finite} that $|C_{\overline G}(\overline h)|\le n$, for some $n$ which depends only on the group $G$. This proves the result.
\end{proof}

Since periodic locally graded BCI-groups are locally finite \cite[Theorem 5.3]{FLTT3}, we have the following corollary of Theorem \ref{normalizers locally finite}.

\begin{cor}
\label{locally graded}
Let $G$ be a periodic locally graded group.
Then the following conditions are equivalent:
\begin{enumerate}
\item
$G$ is a BNI-group.
\item
$G$ is a BCI-group.
\item
$G$ satisfies condition (\ref{BNI}).
\end{enumerate}
\end{cor}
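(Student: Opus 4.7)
The plan is to establish a cycle of implications (i) $\Rightarrow$ (iii) $\Rightarrow$ (ii) $\Rightarrow$ (i), exploiting the fact that Theorem \ref{normalizers locally finite} already gives us everything we need as soon as $G$ is known to be locally finite.

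First, the implication (i) $\Rightarrow$ (iii) is immediate from the definitions: the BNI condition (\ref{BNI for all}) requires a uniform bound on $|N_G(H):H|$ for all non-normal subgroups $H$, so in particular it gives one for all non-normal cyclic subgroups, which is exactly (\ref{BNI}).

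Next, for (iii) $\Rightarrow$ (ii) I would observe that, whenever $\langle x\rangle\ntriangleleft G$, we have the chain of inclusions $\langle x\rangle\le C_G(x)\le N_G(\langle x\rangle)$, so that
\[
|C_G(x):\langle x\rangle|\le |N_G(\langle x\rangle):\langle x\rangle|.
\]
Hence a uniform bound for the right-hand side (condition (\ref{BNI})) gives the same bound for the left-hand side, which is the BCI condition (\ref{BCI}).

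Finally, for (ii) $\Rightarrow$ (i) I would invoke the quoted result \cite[Theorem 5.3]{FLTT3}: every periodic locally graded BCI-group is locally finite. Once $G$ is known to be locally finite and to satisfy the BCI condition, Theorem \ref{normalizers locally finite} applies and gives immediately that $G$ is a BNI-group. No step in this chain presents a real obstacle, since the only nontrivial input, namely the reduction from periodic locally graded to locally finite under the BCI hypothesis, is already available in the literature; all the work for the locally finite case has been done in Theorem \ref{normalizers locally finite}.
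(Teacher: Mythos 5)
Your proposal is correct and is essentially the paper's own argument: the corollary is obtained exactly by combining the reduction of \cite[Theorem 5.3]{FLTT3} (periodic locally graded BCI-groups are locally finite) with Theorem \ref{normalizers locally finite}, while the implications (i) $\Rightarrow$ (iii) $\Rightarrow$ (ii) are the trivial ones already noted in the introduction via $\langle x\rangle\le C_G(x)\le N_G(\langle x\rangle)$. Nothing is missing.
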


\section{Non-periodic groups}

We start this section by considering infinite locally nilpotent FNI- and BNI-groups.
By Theorem \ref{normalizers locally finite}, we only need to deal with non-periodic groups.
On the other hand, we know from \cite[Corollary 4.4]{FLTT4} that in this situation BCI-groups are abelian, and so the same holds for BNI-groups.
Hence it suffices to study the case of FNI-groups.
To this purpose, we recall the characterization of non-periodic locally nilpotent FCI-groups which we obtained in \cite[Theorem 3.7]{FLTT4}.

\begin{thm}
\label{non-periodic locally nilpotent}
A non-periodic group $G$ is a locally nilpotent FCI-group if and only if either $G$ is abelian or $G=\langle g\rangle\ltimes D$, where $g$ is of infinite order, $D$ is a Dedekind group which is the product of finitely many $p$-groups of finite rank, and $g$ acts on $D$ as a power automorphism $\varphi$ satisfying the following conditions:
\begin{enumerate}
\item
If $D_2$ is non-abelian, $\varphi_2$ is the identity automorphism.
\item
If $p>2$ then $\exp \varphi_p\equiv 1 \pmod p$, and $\exp \varphi_p\neq 1$ if $D_p$ is infinite.
\item
If $p=2$ and $D_2$ is infinite, then $\exp \varphi_2\neq 1, -1$.
\end{enumerate}
\end{thm}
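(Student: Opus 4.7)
My plan is to prove the biconditional in two directions. For the \emph{sufficiency} direction, abelian groups are trivially FCI (every cyclic subgroup is normal), and for $G=\langle g\rangle\ltimes D$ with the stated properties I would verify FCI by direct computation. Since $D$ is Dedekind and $g$ acts on $D$ as a power automorphism, every cyclic subgroup of $D$ is normal in $G$; hence any $\langle x\rangle\ntriangleleft G$ must arise from some $x=g^kd$ with $k\neq 0$. The centralizer $C_G(x)$ then decomposes along the primary components of $D$, the local contribution at a prime $p$ being controlled by the fixed-point subgroup of $\varphi_p^k$ on $D_p$. Conditions (i)--(iii), together with the finiteness of $\pi(D)$ and of each rank $r(D_p)$, bound every local factor finitely, giving $|C_G(x):\langle x\rangle|<\infty$.

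For the \emph{necessity} direction, assume $G$ is non-abelian, non-periodic, locally nilpotent and FCI, with torsion subgroup $T=T(G)$. The first structural step is to show $T$ is Dedekind and that $G$ acts on $T$ by power automorphisms. The key observation is that in a locally nilpotent group, for any torsion $x$ and any infinite-order $y$, the finitely generated nilpotent subgroup $\langle x,y\rangle$ has a \emph{finite} torsion subgroup containing the normal closure of $x$; therefore some nontrivial power of $y$ centralizes $x$, and $C_G(x)$ always contains infinite-order elements. If $\langle x\rangle\ntriangleleft G$ for some torsion $x$, the FCI condition combined with $|\langle x\rangle|<\infty$ would force $C_G(x)$ to be finite, a contradiction. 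Hence every cyclic subgroup of $T$ is normal in $G$, which is equivalent to $T$ being Dedekind and $G$ acting on $T$ by power automorphisms. Next I would prove that $G/T$ is infinite cyclic: if the torsion-free rank of $G/T$ were at least two, then two independent infinite-order elements modulo $T$, through the nilpotent commutator calculus, would yield an infinite-order $x$ with $\langle x\rangle\ntriangleleft G$ and $|C_G(x):\langle x\rangle|=\infty$. Once $G/T$ is infinite cyclic, the periodicity of $T$ gives the splitting $G=\langle g\rangle\ltimes D$ with $D=T$.

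The remaining, and I expect most delicate, task is to pin down the structural restrictions on $D$ and on $\varphi$. Prime by prime, I would examine elements $x=gd$ with $d\in D_p$ and compare $C_G(x)/\langle x\rangle$ with the fixed-point subgroup of $\varphi_p$ on $D_p$. For odd $p$, unless $\exp\varphi_p\equiv 1\pmod p$, a Lemma \ref{cent of auto}-style argument makes the fixed-point subgroups of $\varphi_p$ collapse and forces $D_p$ to be finite; the condition $\exp\varphi_p\neq 1$ when $D_p$ is infinite is needed to avoid $g$ centralizing an infinite abelian piece of $D$, which would create non-normal cyclic subgroups with unbounded centralizer indices. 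The case $p=2$ is subtler: the non-abelian Dedekind subcase forces $\varphi_2=\mathrm{id}$ because non-abelian Dedekind $2$-groups are rigid under power automorphisms, and the abelian subcase with infinite $D_2$ must exclude both $\exp\varphi_2=1$ (infinite centralizer) and $\exp\varphi_2=-1$ (involutions $gd$ with large centralizer in $D_2$). Finally, the finiteness of $\pi(D)$ and the finite rank of each $D_p$ are forced by parallel enumeration of non-normal cyclic subgroups $\langle gd\rangle$, which collectively constrain $D$ to have only finitely many primary components, each of finite rank.
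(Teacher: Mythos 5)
A preliminary remark: this paper does not actually prove Theorem \ref{non-periodic locally nilpotent}; it is recalled verbatim from \cite[Theorem 3.7]{FLTT4}, so there is no internal proof to compare yours against, and your text must stand on its own. As it stands it is a programme rather than a proof, and it has gaps that go beyond missing routine detail. The most serious one is in the sufficiency direction: the statement asserts that $G=\langle g\rangle\ltimes D$ satisfying (i)--(iii) is a \emph{locally nilpotent} FCI-group, and you never address local nilpotency at all. This is not a formality: the congruence $\exp\varphi_p\equiv 1\pmod p$ and condition (i) are precisely what make every finitely generated subgroup of $G$ nilpotent, and, conversely, in the necessity direction that congruence is forced by local nilpotency for \emph{every} $p$, whether $D_p$ is finite or infinite (a subgroup $\langle g,d\rangle$ with $o(d)=p$ and $g$ acting on $\langle d\rangle$ with order dividing $p-1$ and nontrivially is finitely generated and not nilpotent). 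Your proposed mechanism for (ii) --- that without the congruence ``a Lemma \ref{cent of auto}-style argument makes the fixed-point subgroups collapse and forces $D_p$ to be finite'' --- is therefore the wrong one: it cannot produce the condition when $D_p$ is finite, where it must still hold, and it imports the locally finite picture of Theorem \ref{determination locally finite FCI-groups}, where such coprime actions genuinely occur, into the locally nilpotent setting, where they cannot.

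There are further concrete gaps in the necessity direction. Your reason for excluding $\exp\varphi_2=-1$ when $D_2$ is infinite (``involutions $gd$ with large centralizer'') cannot work, since every element $gd$ has infinite order because $g$ does; the actual obstruction is that $g^2$ would centralize the infinite abelian $D_2$, so that for $d\in D_2$ of order greater than $2$ the subgroup $\langle g^2d\rangle$ is non-normal while $\langle g^2d\rangle\cap D_2=1$, giving $|C_G(g^2d):\langle g^2d\rangle|\ge |D_2|=\infty$. The claim that $G/T$ is infinite cyclic is not established: you treat only torsion-free rank at least two, and even there ``nilpotent commutator calculus would yield\dots'' is a placeholder, while the rank-one non-cyclic possibilities (images isomorphic to non-cyclic subgroups of $\Q$, or a non-abelian torsion-free quotient) are never excluded and need a genuine argument; the splitting itself is fine once $G/T\cong\Z$, but not before. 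Finally, the finiteness of $\pi(D)$ and of each $r(D_p)$ is simply asserted (``parallel enumeration''), although manufacturing explicit non-normal cyclic subgroups of infinite centralizer index from infinitely many primary components, or from a $D_p$ of infinite rank, is one of the substantive parts of the theorem. On the positive side, your argument that every cyclic subgroup of the torsion subgroup is normal in $G$ (the torsion subgroup of $\langle x,y\rangle$ is finite and contains the normal closure of $x$, so a nontrivial power of $y$ centralizes $x$) is correct and is the right first step; the remaining steps need to be carried out, not described, before this counts as a proof.
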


\begin{rmk}
In the previous statement, an expression like $\exp\varphi\ne t$ does not mean that $t$ is not a exponent for $\varphi$, but rather that we can choose an exponent for $\varphi$ which is different from $t$.
\end{rmk}

Next we give a generalization of Proposition 2.2 of \cite{FLTT3} in the class of locally nilpotent groups, as we did in Proposition \ref{G/N FCI locally finite} for locally finite groups.

\begin{pro}
\label{G/N FCI locally nilpotent}
Let $G$ be a locally nilpotent FCI-group, and let $N$ be a normal subgroup of $G$.
Then $G/N$ is an FCI-group.
\end{pro}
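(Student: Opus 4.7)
The plan is to combine a structural reduction with a direct computation of centralizers. First, I reduce to the main case. If $G$ is periodic then $G$ is locally finite (being locally nilpotent and periodic), so Proposition~\ref{G/N FCI locally finite} gives that $G/N$ is a BCI-group, hence an FCI-group. If $G$ is non-periodic and abelian, then $G/N$ is abelian and every cyclic subgroup is normal, so the FCI condition is vacuous. Hence I may assume $G$ is non-periodic and non-abelian, in which case Theorem~\ref{non-periodic locally nilpotent} gives $G=\langle g\rangle\ltimes D$ with $g$ of infinite order, $D$ a Dedekind group that is a product of finitely many $p$-groups $D_p$ of finite rank, and $\varphi$ a power automorphism satisfying conditions~(i)--(iii).

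Write $\overline G=G/N$, $\overline D=DN/N$, $\overline g=gN$, and let $\overline\varphi$ denote the induced power automorphism of $\overline D$. Let $\overline x\in\overline G$ be such that $\langle\overline x\rangle\ntriangleleft\overline G$. Since $\overline D$ is Dedekind and $\overline g$ acts on $\overline D$ as a power automorphism, every cyclic subgroup contained in $\overline D$ is $\overline G$-invariant; hence $\overline x\notin\overline D$, and I may write $\overline x=\overline g^k\overline d$ with $\overline g^k\notin\overline D$. A direct computation (handling separately the abelian part of $\overline D$, where one obtains the identification $C_{\overline G}(\overline x)\cap\overline D=\mathrm{Fix}(\overline\varphi^k)$, and any Hamiltonian $2$-part, which is necessarily finite: indeed, if $D_2$ is non-abelian then its elementary abelian factor has finite rank and is therefore finite) reduces the problem to showing that $\mathrm{Fix}(\overline\varphi^k)$ is finite. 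The quotient $C_{\overline G}(\overline x)/(C_{\overline G}(\overline x)\cap\overline D)$ embeds into the cyclic group $\overline G/\overline D$ and contains the image of $\langle\overline x\rangle$, so $|C_{\overline G}(\overline x):\langle\overline x\rangle(C_{\overline G}(\overline x)\cap\overline D)|$ is automatically finite, being the index of cyclic subgroups inside a cyclic group.

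Thus the main step is to verify that $\mathrm{Fix}(\overline\varphi^k)$ is finite. On each Sylow component, $\mathrm{Fix}(\overline\varphi_p^k)=\overline D_p[p^{s_p}]$ where $p^{s_p}=v_p((\exp\overline\varphi_p)^k-1)$, so by the finite rank of $\overline D_p$ this fixed subgroup is finite whenever $\overline\varphi_p^k$ is non-trivial on $\overline D_p$. The crucial observation is that for infinite $\overline D_p$ and $k\ne 0$, $\overline\varphi_p^k$ must be non-trivial. Indeed, a finite-rank abelian $p$-group of finite exponent is finite, so an infinite $\overline D_p$ has infinite exponent, and then $\exp\overline\varphi_p$ is uniquely determined in $\Z_p^\times$ and coincides with $\exp\varphi_p$. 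Conditions~(ii) and~(iii) of Theorem~\ref{non-periodic locally nilpotent} give $\exp\varphi_p\in(1+p\Z_p)\setminus\{1\}$ when $p>2$ and $\exp\varphi_2\in\Z_2^\times\setminus\{\pm 1\}$; since the torsion of $1+p\Z_p$ is trivial for $p>2$ and that of $\Z_2^\times$ is $\{\pm 1\}$, no non-zero power of $\overline\varphi_p$ can be the identity. The main obstacle is to make the initial identification $C_{\overline G}(\overline x)\cap\overline D=\mathrm{Fix}(\overline\varphi^k)$ fully rigorous across both the abelian and the Hamiltonian Dedekind cases and to carry out carefully the bookkeeping with $\langle\overline x\rangle\cap\overline D$.
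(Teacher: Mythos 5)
Your argument is correct, but it is organized quite differently from the paper's proof. The paper splits according to whether $N\le D$ or not: if $N\not\le D$ it shows that $G/N$ is in fact \emph{finite} (an element $g^kd\in N$ would act trivially on $\overline D_p$ modulo $N$, forcing $t_p^k=1$ for any $p$ with $\overline D_p$ of infinite exponent, against the fact that $t_p=\exp\varphi_p$ has infinite order in $\Z_p^{\times}$); if $N\le D$ it simply checks that $\overline G=\langle\overline g\rangle\ltimes\overline D$ again satisfies conditions (i)--(iii) of Theorem \ref{non-periodic locally nilpotent} (one may take $\exp\overline\varphi_p=\exp\varphi_p$) and invokes the sufficiency direction of that theorem. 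You instead verify the FCI condition in $\overline G$ directly and uniformly, with no case distinction: non-normal cyclic subgroups are generated by elements $\overline x=\overline g^k\overline d\notin\overline D$, the Hamiltonian part of $\overline D$ is finite, $C_{\overline A}(\overline x)=\mathrm{Fix}(\overline\varphi^k)$ on the central part $\overline A$, and this fixed subgroup is finite because, for infinite $\overline D_p$, the (unique) exponent $t_p$ lies outside the torsion of $\Z_p^{\times}$ by (ii)--(iii), so $v_p(t_p^k-1)<\infty$ and finite rank finishes the job. The number-theoretic core (infinite order of $t_p$ in $\Z_p^{\times}$) is the same as in the paper's first case, but your route in effect re-proves the sufficiency half of Theorem \ref{non-periodic locally nilpotent} for the quotient rather than citing it; the paper's version is shorter given that theorem and yields extra structural information (finiteness of $G/N$ when $N\not\le D$), while yours is self-contained at the level of centralizer estimates. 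The routine points you flag do go through as you expect; only note that the finiteness of $|C_{\overline G}(\overline x):\langle\overline x\rangle(C_{\overline G}(\overline x)\cap\overline D)|$ is not because it is ``an index of cyclic subgroups in a cyclic group'' (that can be infinite), but because the image of $\overline x$ in the cyclic group $\overline G/\overline D$ is nontrivial, which your observation $\overline g^k\notin\overline D$ already provides.
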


\begin{proof}
We may assume that $G$ is non-periodic and non-abelian.
Then we can write $G=\langle g \rangle \ltimes D$, as in
Theorem \ref{non-periodic locally nilpotent}, and we use the notation in the
statement of that theorem.
Also, we use the bar notation in the quotient $G/N$.

Suppose first that $N$ is not contained in $D$.
We claim that $\overline G$ is finite in this case.
The quotient $G/D$ is infinite cyclic, and so it is clear that $\overline G/\overline D$ is finite.
Hence it suffices to see that $\overline D_p$ is finite for every $p\in\pi(D)$, since $\pi(D)$ is finite.

Let us fix $p\in\pi(D)$ and assume by way of contradiction that $\overline D_p$ is infinite.
Since $D_p$ is of finite rank, it follows that $\overline D_p$, and so also $D_p$, is of infinite exponent.
As a consequence, $t_p=\exp\varphi_p$ is a uniquely determined element in $\Z_p$.
By (ii) and (iii) of Theorem \ref{non-periodic locally nilpotent}, we have
$t_p\equiv 1\pmod p$ and $t_p\ne 1$ if $p>2$ and $t_2\ne 1,-1$ if $p=2$.
In any case, $t_p$ is an element of infinite order in the multiplicative group
$\Z_p^{\times}$.
Now consider an element $x=g^kd\in N$ with $k\ge 1$ and $d\in D$.
Since $D$ is a Dedekind group and $D_p$ is abelian if $p=2$ by (i) and (iii) of
Theorem \ref{non-periodic locally nilpotent}, it follows that $D_p\le Z(D)$ in any case. Thus $d$ acts trivially on $D_p$, and consequently
\[
\overline 1 = \overline D_p^{\overline x-\overline 1}
= \overline D_p^{\varphi_p^k-1} = \overline D_p^{t_p^k-1}.
\]
Since $\overline D_p$ is of infinite exponent, it follows that $t_p^k-1$ is divisible by arbitrarily high powers of $p$, and consequently $t_p^k-1=0$.
Thus $t_p$ is of finite order in $\Z_p^{\times}$, and this contradiction proves the claim.

Now suppose that $N$ is contained in $D$.
Put $N=\prod_{p\in\pi(D)} \, N_p$, where $N_p\le D_p$ is the Sylow $p$-subgroup of $N$.
We have $\overline G=\langle \overline g \rangle \ltimes \overline D$, with $\overline g$
of infinite order and $\overline D_p\cong D_p/N_p$.
Now if $\overline \varphi_p$ is the automorphism of $\overline D_p$ induced by conjugation by $\overline g$, then we can choose
$\exp \overline\varphi_p$ equal to $\exp \varphi_p$ for every $p\in\pi(D)$.
As a consequence, $\overline G$ fulfills all conditions which are required in
Theorem \ref{non-periodic locally nilpotent} to conclude that it is an FCI-group.
\end{proof}

Now an argument which is similar to that of Theorem \ref{normalizers locally finite}, with Proposition \ref{G/N FCI locally nilpotent} playing the role of Proposition \ref{G/N FCI locally finite}, yields that non-periodic locally nilpotent FNI-groups are the same as FCI-groups.
Thus we have the following result.

\begin{thm}\label{normalizers locally nilpotent}
Let $G$ be a non-periodic locally nilpotent group.
Then the following conditions are equivalent:
\begin{enumerate}
\item
$G$ is an FNI-group.
\item
$G$ is an FCI-group.
\item
$G$ satisfies condition (\ref{FNI}).
\end{enumerate}
\end{thm}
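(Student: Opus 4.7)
The plan is to prove (ii) $\Rightarrow$ (i), since (i) $\Rightarrow$ (iii) is trivial and the equivalence (iii) $\Leftrightarrow$ (ii) has already been noted in the introduction. The strategy is a direct adaptation of the argument used in Theorem \ref{normalizers locally finite}, with Proposition \ref{G/N FCI locally nilpotent} replacing Proposition \ref{G/N FCI locally finite}.

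Assume $G$ is a non-periodic locally nilpotent FCI-group. The abelian case is vacuous, so I may assume $G$ is non-abelian and invoke Theorem \ref{non-periodic locally nilpotent} to write $G=\langle g\rangle\ltimes D$, with $g$ of infinite order and $D$ a Dedekind group on which $g$ acts as a power automorphism. Let $H$ be an arbitrary non-normal subgroup of $G$. Since $D$ is Dedekind and power automorphisms preserve every subgroup of $D$, every subgroup of $D$ is normal in $G$; hence $H\not\le D$. I pick $h\in H\smallsetminus D$, write $H=\langle h, H\cap D\rangle$, and note that $H\cap D\lhd G$.

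The crucial step is to identify $N_G(H)$ inside the quotient $\overline{G}=G/(H\cap D)$. For every $x\in G$, the condition $x\in N_G(H)$ reduces to $xhx^{-1}\in H$, since $H\cap D\lhd G$. Because $G/D$ is infinite cyclic and $hD$ has infinite order there, $xhx^{-1}\in H$ then forces $xhx^{-1}\in h(H\cap D)$, i.e.\ $\overline{x}\in C_{\overline{G}}(\overline{h})$. This yields the identification
\[
N_G(H)/(H\cap D)=C_{\overline{G}}(\overline{h}).
\]
By Proposition \ref{G/N FCI locally nilpotent}, $\overline{G}$ is again an FCI-group; and since $H\ntriangleleft G$, the cyclic subgroup $\langle\overline{h}\rangle=\overline{H}$ is non-normal in $\overline{G}$. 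The FCI condition in $\overline{G}$ therefore gives $|C_{\overline{G}}(\overline{h}):\langle\overline{h}\rangle|<\infty$, and hence
\[
|N_G(H):H|=|C_{\overline{G}}(\overline{h}):\overline{H}|<\infty,
\]
as required.

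The main point at which this argument diverges from the proof of Theorem \ref{normalizers locally finite} is that $G/D$ is now infinite cyclic, so one can no longer dominate $|N_G(H)D:HD|$ by the finite index $|G:D|$. This is not an actual obstacle here, because that quotient factor is absorbed upon passing to $\overline{G}$; it simply clarifies why the argument yields only FNI and not BNI, which is in any case impossible for non-abelian non-periodic locally nilpotent groups.
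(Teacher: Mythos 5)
Your proposal is correct and is essentially the argument the paper intends, since the paper itself only sketches this proof as ``an argument similar to that of Theorem \ref{normalizers locally finite}, with Proposition \ref{G/N FCI locally nilpotent} playing the role of Proposition \ref{G/N FCI locally finite}''. Your one adjustment --- identifying $N_G(H)/(H\cap D)$ with $C_{\overline G}(\overline h)$ directly instead of first splitting off the factor $|N_G(H)D:HD|\le |G:D|$, which is unavailable here because $G/D$ is infinite cyclic --- is exactly the adaptation needed (just make sure to choose $h$ so that $hD$ generates $HD/D$, so that $H=\langle h\rangle(H\cap D)$ holds).
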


We finish by considering non-periodic groups in general which satisfy one of the conditions for normalizers with bounds, i.e.\ that are either BNI-groups or satisfy (\ref{BNI}).

\begin{thm}\label{normalizers non-periodic}
Let $G$ be a non-periodic group.
Then the following hold:
\begin{enumerate}
\item
$G$ is a BNI-group if and only if either $G$ is abelian or $G=\langle g, A \rangle$, where $A$ is a non-periodic abelian group of finite $0$-rank and finite $2$-rank, and $g$ is an element of order at most $4$ such that $g^2\in A$ and $a^g=a^{-1}$ for all $a\in A$.
\item
$G$ satisfies (\ref{BNI}) if and only if $G$ is a BCI-group.
\end{enumerate}
\end{thm}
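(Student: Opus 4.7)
The plan is to handle parts (ii) and (i) in turn, with (ii) feeding into the forward direction of (i).

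For part (ii), the implication $(\ref{BNI}) \Rightarrow$ BCI is immediate from $C_G(x) \le N_G(\langle x\rangle)$. For the converse, assume $G$ is a non-periodic BCI-group with bound $n$ and $\langle x\rangle \ntriangleleft G$. Using the decomposition
\[
|N_G(\langle x\rangle):\langle x\rangle|
= |N_G(\langle x\rangle):C_G(x)| \cdot |C_G(x):\langle x\rangle|,
\]
the second factor is at most $n$ by hypothesis, while the first embeds into $\Aut\langle x\rangle$. When $x$ has infinite order this contributes at most $2$; when $x$ has finite order, the classification of non-periodic BCI-groups from \cite{FLTT4} forces a uniform bound on the admissible orders of $x$, giving a uniform bound on the first factor as well.

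For the backward direction of part (i), let $G = \langle g, A\rangle$ be as described. Since conjugation by $g$ inverts $A$, every subgroup of $A$ is normal in $G$, so any non-normal $H$ meets $G \smallsetminus A$ and can be written $H = \langle g a_0, M\rangle$ with $M = H\cap A$; the identity $bg = gb^{-1}$ then yields
\[
(g^j b)(ga_0)(g^j b)^{-1} = g \cdot (a_0 b^{-2})^{(-1)^j}.
\]
Hence $g^j b$ normalizes $H$ precisely when $b^2$ lies in one of two explicit cosets of $M$, so $|N_G(H):H|$ is bounded above by a constant multiple of $|(A/M)[2]|$. Applying multiplication by $2$ to $0 \to M \to A \to A/M \to 0$ gives a six-term exact sequence, from which $|(A/M)[2]| \le |A[2]| \cdot |M/2M|$, and the elementary estimate $\dim_{\F_2}(B/2B) \le r_0(B) + r_2(B)$ for any abelian $B$ of finite $0$-rank and $2$-rank bounds this uniformly by $2^{r_0(A) + 2 r_2(A)}$.

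For the forward direction of part (i), suppose $G$ is non-abelian. Part (ii) shows $G$ is a BCI-group, so \cite{FLTT4} supplies an initial structural description, and the BNI condition on non-cyclic subgroups extracts the extra constraints: (a) BNI applied to non-cyclic subgroups internal to the Dedekind factor forces it to be abelian; (b) BNI applied to subgroups $\langle g a_0, M\rangle$ for well-chosen $M$ rules out any power action on $A$ other than inversion; (c) the conditions $g^2 \in A$ and $g^4 = 1$ then drop out of inversion together with $g \in N_G(A)$; and (d) reversing the backward calculation shows that infinite $0$-rank or infinite $2$-rank of $A$ would produce subgroups $M$ with $|(A/M)[2]|$ unbounded, contradicting BNI. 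The main obstacle lies in steps (a) and (b): one must find non-cyclic test subgroups that genuinely exploit the gap between BNI and BCI, and verify that each unwanted structural feature of the Dedekind part or of the $g$-action really does produce an unbounded normalizer index — this is precisely the content that distinguishes non-periodic BNI-groups from non-periodic BCI-groups.
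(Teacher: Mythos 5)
Your part (ii) and the backward direction of part (i) are essentially correct and close to the paper's own arguments. For (ii) the paper makes the same split of $|N_G(\langle x\rangle):\langle x\rangle|$ at $C_G(x)$; the one point you wave at ("a uniform bound on the admissible orders of $x$") is settled there by noting that $\langle x\rangle\ntriangleleft G$ forces $x\notin A$, hence $x^2=g^2$ and $|x|\in\{2,4\}$, so the automorphism factor is at most $2$ --- you should make that verification explicit. For the backward direction of (i) the paper likewise reduces membership in the normalizer to a condition ``$a^2\in H\cap A$'' and then bounds the resulting elementary abelian $2$-section, but it does so by a direct count of independent elements (any $s$ independent classes lift to $J\le A$ with $2^s\le |J:J^2|\le 2^{r_0+r_2}$) instead of your six-term-sequence estimate $|(A/M)[2]|\le |A[2]|\,|M/2M|$; both yield a uniform bound, so this difference is harmless.

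The genuine gap is in the forward direction of (i). Your steps (a)--(c) are not where the content lies: since a BNI-group satisfies (\ref{BNI}) and is therefore a BCI-group, Theorem 4.3 of \cite{FLTT4} already supplies the \emph{full} structure $G=\langle g,A\rangle$ with $A$ non-periodic abelian of finite $2$-rank, $g$ of order at most $4$, $g^2\in A$ and $a^g=a^{-1}$; no non-cyclic test subgroups are needed to force abelianness or the inversion action, so the ``main obstacle'' you identify is a non-issue. The only thing the BNI hypothesis must add beyond BCI is the finiteness of $r_0(A)$, i.e.\ your step (d) --- and that is precisely the step you assert without proof (``reversing the backward calculation shows\dots''). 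As written, the proposal therefore does not establish the one implication that separates BNI from BCI. The paper closes it by a section argument rather than an internal normalizer computation: if $r_0(A)$ were infinite, take a torsion-free subgroup $B\le A$ of infinite $0$-rank, set $H=\langle g,B\rangle$ and $N=\langle g^2\rangle\lhd G$; then $H/N\cong\Dih(B)$, and factoring out fourth powers gives a section $\Dih(B/K)$ with $B/K$ of infinite $2$-rank, which is not a BCI-group --- contradicting the fact that the BNI-condition (unlike BCI) passes to subgroups and quotients. Your sketched alternative (non-normal subgroups of the form $\langle g,\langle b_1^2,\dots,b_k^2\rangle\rangle$ whose normalizers contain $b_1,\dots,b_k$) can be made to work, but you would have to carry it out: verify non-normality and show the index grows without bound as $k\to\infty$.
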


\begin{proof}
(i)
Let us first assume that $G$ is a BNI-group.
Then $G$ is also a BCI-group, and then, by Theorem 4.3 of \cite{FLTT4}, $G$ is either abelian or of the form $G=\langle g, A \rangle$, where $A$ is a non-periodic abelian group of finite $2$-rank, and
$g$ is an element of order at most $4$ such that $g^2\in A$ and $a^g=a^{-1}$ for all
$a\in A$.
Thus we only need to prove that, in the latter case, $A$ is also of finite $0$-rank.

Arguing by contradiction, let us consider an infinite sequence $\{b_n\}_{n\in\N}$ of linearly independent elements of infinite order inside $A$.
Then $B=\langle b_n \mid n\in\N \rangle$ is torsion-free.
Let us put $H=\langle g,B \rangle$ and $N=\langle g^2 \rangle$.
Since $N$ is contained in $A$, then $N$ is normal in $G$ and the quotient $H/N$ is isomorphic to the generalized dihedral group $\Dih(B)$ (i.e. the semidirect product of $B$ with a cyclic group of order $2$ acting on $B$ by inversion).
Now, since $B$ is torsion-free of infinite $0$-rank, there are quotients of $\Dih(B)$ which are not BCI-groups: for example, if $K$ is the subgroup consisting of all fourth powers of elements of $B$, then $\Dih(B)/K\cong \Dih(B/K)$ is not a BCI-group, because $B/K$ is of infinite 2-rank (see \cite[Section 2]{FLTT3}).
This is a contradiction, since every section of $G$ is a BNI-group (recall that the BNI-condition is hereditary for subgroups and quotients), and consequently also a BCI-group.

Now we prove the converse. To this purpose, we consider a group
$G=\langle g, A \rangle$ as in the statement of the theorem, and see that $G$ is a
BNI-group.
More precisely, if $H$ is a non-normal subgroup of $G$, we are going to show that
$|N_G(H):H|\le 2^{r_0+r_2}$, where $r_0$ and $r_2$ are the $0$-rank and the $2$-rank of $A$, respectively.

Since all subgroups of $A$ are normal in $G$, it follows that $H$ is not contained in $A$.
Thus $H=\langle h \rangle B$, where $h\in H\smallsetminus A$ and $B=H\cap A$.
Since $|G:A|=2$, we have $G=\langle h \rangle A$.
Then $N_G(H)=\langle h \rangle N_A(H)$ and  $|N_G(H):H| = |N_A(H):B|$.
Observe that, for a given $a\in A$, we have $a\in N_A(H)$ if and only if $[h,a]\in H$, i.e.\
if and only if $a^2\in B$.
As a consequence, $N_A(H)/B$ is an elementary abelian $2$-group.

Thus in order to prove that $|N_G(H):H|\le 2^{r_0+r_2}$, it suffices to show that the following holds: for any choice of elements $a_1,\ldots,a_s\in N_A(H)$ whose images in $N_A(H)/B$ are linearly independent over $\F_2$, we have $s\le r_0+r_2$.
If $J=\langle a_1,\ldots,a_s \rangle$, then $J/(J\cap B)$ is elementary abelian of order $2^s$, and consequently $|J:J^2|=2^s$.
Now, let us write $J=K\times T$, where $K$ is torsion-free of rank $r$ and $T$ is the torsion subgroup of $J$.
If $U$ is the subgroup of all elements of $T$ of order at most $2$, then we have
\[
|J:J^2| = |K:K^2| \, |T:T^2| = 2^r |U|,
\]
since $U$ is the kernel of the homomorphism sending every element of $T$ to its square.
By the definition of $0$-rank and $2$-rank, we have $r\le r_0$ and $|U|\le 2^{r_2}$.
Thus $|J:J^2|\le 2^{r_0+r_2}$, and consequently $s\le r_0+r_2$, as desired.

(ii)
We only need to prove that a non-abelian BCI-group $G$ satisfies (\ref{BNI}).
As above, we have $G=\langle g \rangle A$, where $A$ is a non-periodic abelian group of finite $2$-rank, and $g$ is an element of order at most $4$ such that $g^2\in A$ and $a^g=a^{-1}$ for all $a\in A$.
Let $x\in G$ be such that $\langle x \rangle \ntriangleleft G$.
Then $x\not\in A$, and we can write $x=ga$ for some $a\in A$.
Since $x^2=g^2$, the order of $x$ is either $2$ or $4$.
Thus
\[
|N_G(\langle x \rangle):\langle x \rangle|
=
|N_G(\langle x \rangle):C_G(x)| \, |C_G(x):\langle x \rangle|
\le
2  \, |C_G(x):\langle x \rangle|
\]
is bounded, since $G$ is a BCI-group.
\end{proof}

As already indicated in the proof of the previous theorem, according to
\cite[Theorem 4.3]{FLTT4}, the structure of non-periodic BCI-groups is as described in (i) above, but only requiring that the $2$-rank of $A$ should be finite.
Hence we need to impose the extra condition that also the $0$-rank of $A$ should be finite  for a non-periodic BCI-group to be a BNI-group.
As a consequence, groups satisfying (\ref{BNI}) are not the same as BNI-groups, that is, imposing the condition only on {\em cyclic} non-normal subgroups instead of all non-normal subgroups is really weaker in the realm of non-periodic groups.
Observe the difference with the case of locally finite groups, where being a BNI-group, being a BCI-group, and satisfying (\ref{BNI}) were all equivalent conditions.

\end{document}